\numberwithin{equation}{section}
\theoremstyle{plain}
\newtheorem{theorem}{Theorem}[section]
\newtheorem{proposition}[theorem]{Proposition}
\theoremstyle{definition}
\newtheorem{example}[theorem]{Example}
\definecolor{internalLink}{rgb}{1,0,0.5}
\definecolor{citeLink}{rgb}{0,0.5,0}
\definecolor{urlLink}{rgb}{0,0.5,0.5}
\title[CWR sequence of invariants of alternating links and its properties]{CWR sequence of invariants of alternating links\\ and its properties}
\author{Micha{\l} Jab{\l}onowski}
\address{Institute of Mathematics, Faculty of Mathematics, Physics and Informatics,\newline University of Gda\'nsk, 80-308 Gda\'nsk, Poland}
\keywords{knot invariant, link invariant, alternating knot, CWR invariant, knots tabulation}
\subjclass[2020]{57K10 (primary), 57K14 (secondary)}
\email{michal.jablonowski@gmail.com}
\date{\today}
\begin{document}

\maketitle

\begin{abstract}
	
We present the $CWR$ invariant, a new invariant for alternating links, which builds upon and generalizes the $WRP$ invariant. The $CWR$ invariant is an array of two-variable polynomials that provides a stronger invariant compared to the $WRP$ invariant. We compare the strength of our invariant with the classical HOMFLYPT, Kauffman $3$-variable, and Kauffman $2$-variable polynomials on specific knot examples. Additionally, we derive general recursive "skein" relations, and also specific formulas for the initial components of the $CWR$ invariant using weighted adjacency matrices of modified Tait graphs.

\end{abstract}
\vspace{-0.3cm}
\section{Introduction}

The paper concerns embeddings of a circle in three-dimensional space, known as a knot (or link if there can be more than one component). A significant aspect of knot theory is the development and application of knot invariants, mathematical constructs that distinguish different knots. Among these invariants, polynomial invariants have played a pivotal role. Notable examples include the Alexander polynomial, the Jones polynomial, and the HOMFLYPT polynomial. These invariants have been instrumental in classifying knots and links, particularly alternating links, which are links with diagrams where over-crossings and under-crossings alternate as one travels along each component.
\par 
In this paper, we introduce a new invariant for alternating links called the $CWR$ invariant. This invariant is a sequence of two-variable polynomials that generalizes the previously defined $WRP$ invariant. The $WRP$ invariant, while useful, has limitations in its ability to distinguish between certain knots. Our new invariant overcomes these limitations by incorporating additional structural information from the knot’s diagram.
\par 
In this paper, Section\;\ref{s1} contains necessary definitions and an example of $CWR$ calculations, and we give proof of $CWR$ invariance of flype-move, needed to apply the proven Tait's Flype Conjecture. In Section\;\ref{s2} we show that $CWR$ can distinguish between knots that other well-known invariants, such as HOMFLYPT and Kauffman polynomials, cannot. We then give properties of the invariant concerning: connected sum operation, chirality, and mutation. Additionally, we show that the $CWR2$ and $CWR3$ invariants can be computed using weighted adjacency matrices of modified Tait graphs, providing a practical computational method. Section\;\ref{s3} contains a recursive relation for $CWR$ for diagrams satisfying given conditions. In Section\;\ref{s4} of this paper, we show computationally generated tables of the invariant for prime knots up to eight crossings.

\section{Definitions}\label{s1}

A \emph{diagram} $D$ of a (non-trivial) knot or link $K$ is a generic projection of the knot or link in the plane, a regular $4$-valent graph, with extra information at each vertex indicating which arc of the link passes over the other. Moreover it is \emph{alternating} when traveling around each link component on a diagram we pass crossing in alternate type fashion (i.e. if one passes an over/under-crossing, the next crossing will be an under/over-crossing, see \cite{Men21} for a recent survey).
\par 
A knot or link diagram is a \emph{reduced diagram} if a diagram without \emph{nugatory crossings}, i.e a crossing, such that its neighborhood looks like in Figure \ref{rys17_1} left, that can be removed without changing the knot type,  and it cannot be separated by any circle in the plane of projection (i.e. the diagram as projection has one connected component).

\begin{figure}[h!t]
	\begin{center}
		\includegraphics[width=10cm]{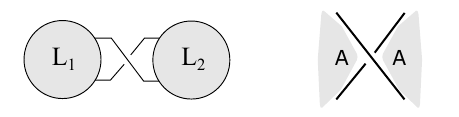}
		\caption{A nugatory crossing (left) and a checkerboard coloring (right).\label{rys17_1}}
	\end{center}
\end{figure}

By a \emph{region} of a reduced diagram $D$ we mean a face of the underlying projection of the link. The diagram can be checkerboard-shaded so that every edge separates a shaded region (black region) from an unshaded one (white region).
\par 
Moreover, we can assume \cite{Lis47, Men21} that if the diagram is reduced then any region of type $A$ as shown in Figure \ref{rys17_1} right is a black region, otherwise it is a white region. Therefore, from now on, we can always assume this convention of coloring of diagram regions, when the diagram is reduced and alternating.
\par   
We consider two graphs associated with $D$ (also known as Tait graphs) the shaded checkerboard graph $G_B$ and the unshaded checkerboard graph $G_W$. The graph $G_B$ has vertices corresponding to the black region regions of $D$, and an edge between two vertices for every crossing at which the corresponding regions meet. The dual graph $G_W$ is defined by analogy but the vertices correspond to the white regions.
\par
On each edge $e$ of graph $G_B$ and $G_W$ we assign its weight $wg(e)$ as a variable $w$ when the corresponding crossing was positive and variable $r$ if it was negative crossing (see Figure \ref{rys17_6}).

\begin{figure}[h!t]
	\begin{center}
		\includegraphics[width=8cm]{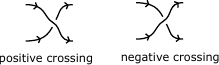}
		\caption{The convention for positive and negative crossings.\label{rys17_6}}
	\end{center}
\end{figure}

\par 
An unoriented graph $G_B^*$ is then obtained from an unoriented graph $G_B$ by consolidating all multiple edges between the same pair of vertices to a single edge that has its weight equal to the product of weights all edges being consolidated. The same goes with defining graph $G_W^*$ from $G_W$.

\par 
For $i>2$, we define $CB_i(w,r)\in\mathbb{Z}[w,r]$ as a two-variable polynomial $\sum_{C_B}\prod_{e}{wg(e)}$ where the sum is taken over all unoriented cycles (i.e. simple closed paths) $C_B$, of length $i$, in a graph $G_B^*$ of the diagram $D$ and the product is taken over all edges in a given cycle. \\ We define $CB_2(w,r)\in\mathbb{Z}[w,r]$ as a sum $\sum_{e}{wg(e)}$ where the sum is taken over all edges in $G_B^*$ (when there are no edges in case of the trivial knot we can put the value as $0$).
\par 
By analogy we define $CW_i(w,r)\in\mathbb{Z}[w,r]$ for all $i>1$ considering a graph $G_W^*$\\ instead of $G_B^*$. For an integer $i>1$, we define $CWR_i(w,r)$ as an ordered tuple\\ $(CB_i(w,r), CW_i(w,r))$, when there are no cycles of length $i>2$ in a given graph we put the corresponding value as $0$. Define $CWR$ of an alternating knot or link $L$ as a sequence of invariants (ordered tuple): $$CWR(L)=(CWR_2, CWR_3, CWR_4, \ldots)$$ of any reduced alternating diagram $D_L$ of $L$. When $CWR_i(w,r)=(0,0)$ for every $i>N$, for some integer $N>2$ then we omit those values $CWR_i(w,r)$ in the sequence for $CWR$, therefore the sequence is finite as there are finitely many cycles in a finite graph.

\subsection{An example}

For the knot $K7a1$ we have calculations show in Figure \ref{K7a1}, that is $CWR(K7a1)= ((4r + 3w, 2r^2 + 3w), (2r^2w + w^3, 2r^2w^2), (2r^2w^2, r^4w^2), (r^4w, 0))$.

\begin{figure}[h!t]
	\begin{center}
		\includegraphics[width=10cm]{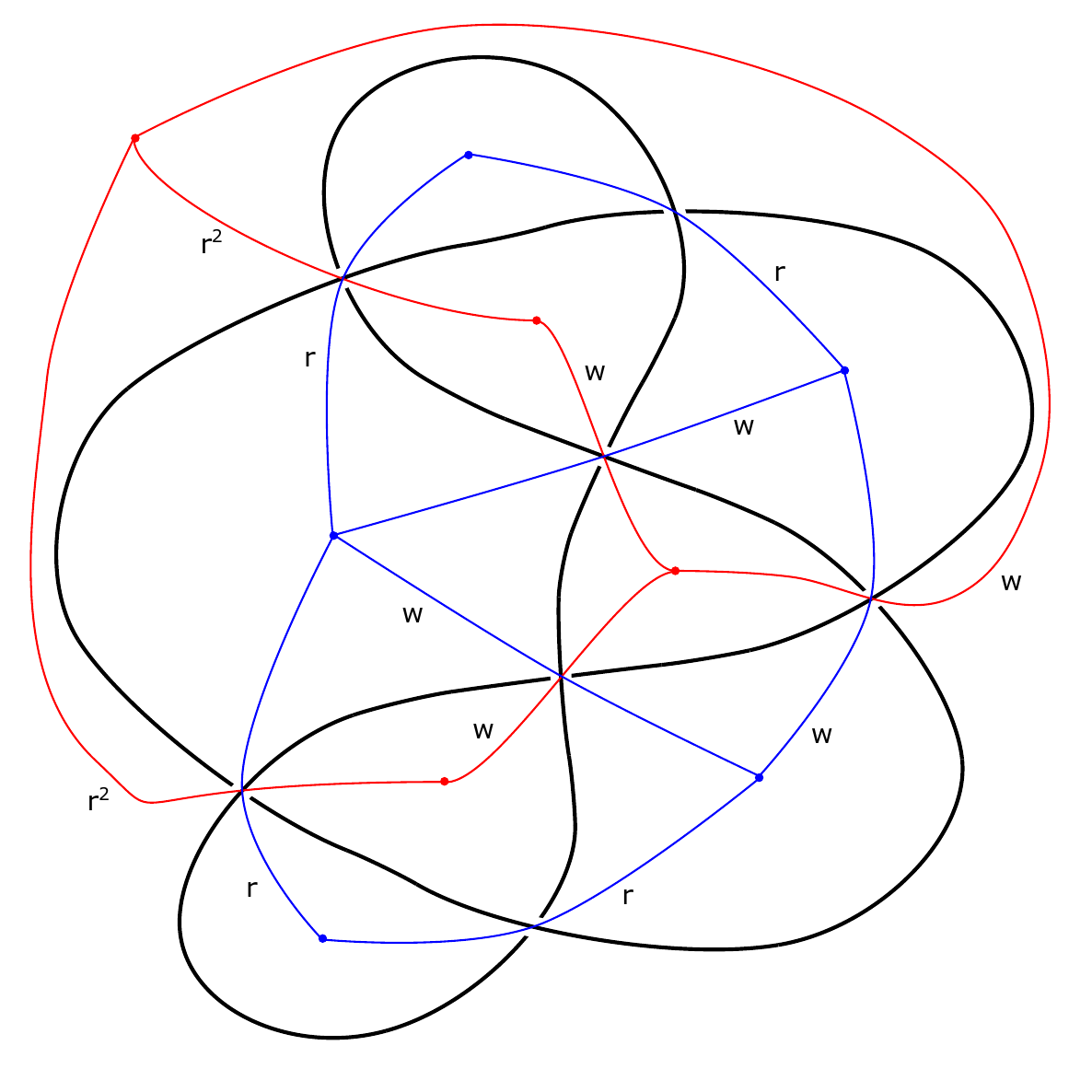}
		\caption{Knot $K7a1$ with their graphs $G_B^*$ (blue) and $G_W^*$ (red).\label{K7a1}}
	\end{center}
\end{figure}

\subsection{An invariance}

\begin{theorem}\label{twA}
	For any pair $D_1$ and $D_2$ of reduced alternating diagrams of a given alternating link, we have $$CWR(D_1)=CWR(D_2).$$
\end{theorem}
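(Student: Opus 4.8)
The plan is to combine the Tait flyping theorem with a local analysis of how a single flype acts on the two Tait graphs. By the flyping theorem of Menasco and Thistlethwaite, any two reduced alternating diagrams of the same link are connected by a finite sequence of flype moves together with planar isotopies, through reduced alternating diagrams. A planar isotopy changes neither $G_B^*$ nor $G_W^*$ as abstract weighted graphs, hence does not affect $CWR$; so it suffices to prove $CWR(D)=CWR(D')$ whenever $D'$ is obtained from a reduced alternating diagram $D$ by a single flype.

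Fix such a flype: there is a disc $\Delta$ meeting $D$ in four points, inside which $D$ consists of a tangle $T$ together with one further crossing $c$ placed in series with $T$ on one side, and the flype relocates $c$ to the opposite side of $T$ while applying a half-turn to $T$. A flype creates and destroys no crossings and alters no over/under or orientation information, so it provides a sign-preserving --- equivalently $wg$-preserving --- bijection between the crossings of $D$ and those of $D'$; in particular the half-turn on $T$ induces a weight-preserving automorphism $\phi$ of the subgraph of the relevant Tait graph coming from the crossings of $T$, interchanging its two attachment vertices. Next I would read off the block structure of the Tait graphs. The four arcs of $\partial\Delta$ lie in regions that alternate in colour; say the two opposite ones, $R_{top}$ and $R_{bot}$, are black. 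One checks from the picture that the subgraph $H_B$ of $G_B(D)$ coming from the crossings of $T$ is joined to the rest of $G_B(D)$ only along the vertices $v(R_{top})$ and $v(R_{bot})$, that the edge $e_c$ of $c$ joins these same two vertices, and that the ``outside'' subgraph is joined to the rest only along them as well; thus $G_B(D)$ --- and, after consolidating parallel edges, $G_B^*(D)$ --- is a union of blocks all sharing the vertex pair $\{v(R_{top}),v(R_{bot})\}$, with the understanding that consolidation merges all the direct $v(R_{top})$--$v(R_{bot})$ edges of the various blocks into a single edge of weight their product. Under the flype, $H_B$ is replaced by $\phi(H_B)$, isomorphic to $H_B$ as a weighted graph with its two distinguished vertices interchanged, while the outside subgraph and the merged edge are unchanged (the latter's weight being $\phi$-invariant). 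Dually, $G_W(D)$ and $G_W^*(D)$ turn out to be a triangle of three blocks --- the outside white subgraph, the subgraph $H_W$ coming from inside $T$, and the single edge dual to $c$ --- in which consecutive blocks meet in exactly one vertex; the flype again replaces $H_W$ by a weight-preservingly isomorphic rotated copy and re-glues, the only effect being that the edge dual to $c$ and the two ends of $H_W$ occupy different hinge vertices than before.

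The proof is then completed by a combinatorial observation about such configurations: if a weighted multigraph is built from blocks that pairwise meet in at most two vertices --- either all in one common pair, or cyclically as a triangle --- then every simple cycle either lies inside a single block or is a concatenation of one terminal-to-terminal path from each block that it meets, its weight being the product of those path-weights; hence the multiset of weighted cycles, and the multiset of weighted edges, of the whole graph depends only on the blocks regarded as weighted graphs with an unordered pair of distinguished vertices, and is invariant both under replacing a block by an isomorphic such marked graph (the isomorphism being allowed to swap the two marks) and under relabelling the shared hinge vertices. Applying this to $G_B^*$ and to $G_W^*$ gives $CB_i(D)=CB_i(D')$ and $CW_i(D)=CW_i(D')$ for all $i\ge 2$, whence $CWR(D)=CWR(D')$ and the theorem follows. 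The step I expect to be the main obstacle is the one in the previous paragraph: correctly extracting the block-and-hinge structure of both Tait graphs from the flype picture --- the dual graph $G_W$ being the delicate case, since the edge dual to the relocated crossing changes which pair of vertices it joins --- and verifying that edge-consolidation interacts cleanly with this structure; granted that, the combinatorial observation and the various degenerate cases (coinciding regions or hinge vertices, a crossingless $T$) are routine.
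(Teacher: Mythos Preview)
Your proposal is correct and follows essentially the same route as the paper: reduce to a single flype via the Menasco--Thistlethwaite flyping theorem, then check that a flype induces a weight-preserving bijection on the cycles of $G_B^*$ and $G_W^*$. Your block-and-hinge description (parallel blocks along $\{v(R_{top}),v(R_{bot})\}$ for one colour, a triangle of blocks for the dual colour) is exactly the content of the paper's figure-based Case~I / Case~II analysis, stated more abstractly; the combinatorial observation you isolate is what the paper expresses by saying that for cycles through both $v_1$ and $v_2$ ``the order may vary so this does not change the product of weights.''
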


\begin{proof}
	
	We use the theorem known as Tait's Flype Conjecture, which states that if $D_1$ and $D_2$ are reduced alternating diagrams, then $D_2$ can be obtained from $D_1$ by a sequence of flype-moves (a local move on a knot diagram that involves flipping a part of the diagram, see Figure \ref{rys17_2}) if and only if $D_1$ and $D_2$ represent the same link. This conjecture has been proven true by Menasco and Thistlethwaite \cite{MenThi91, MenThi93}.
	
		\begin{figure}[h!t]
		\begin{center}
			\includegraphics[width=9cm]{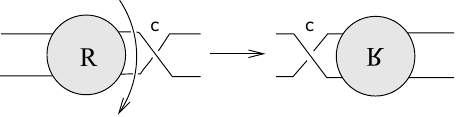}
			\caption{A flype-move on the tangle $R$ moves the crossing $c$ from one side of $R$ to the other, while rotating $R$ by $180^\circ$ around the horizontal axis.\label{rys17_2}}
		\end{center}
	\end{figure}
	
	\begin{figure}[h!t]
		\begin{center}
			\includegraphics[width=13cm]{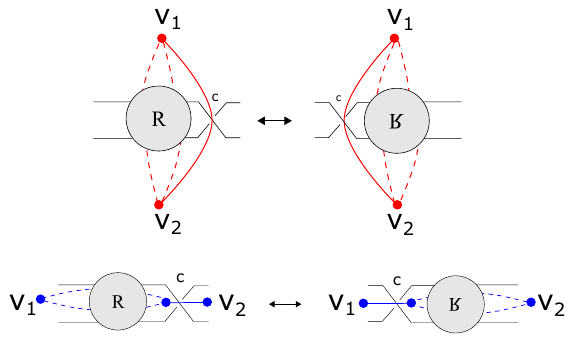}
			\caption{The general situation on graphs in flype-moves. Case I and II.\label{rys17_3}}
		\end{center}
	\end{figure}
	
	The diagrams can be checkerboard-colored, resulting in two types of regions (black and white). Let us fix the reduced alternating diagram before the flype as $D_1$ and after the flype as $D_2$. We can now fix the graph $G_1$ as one of $G_B^*$ or $G_W^*$ for a diagram $D_1$ and the graph $G_2$ as the corresponding graph for a diagram $D_2$. First, we notice that the black and white corresponding regions before and after the flype-move preserves their color. Second, we notice that the flype-move preserves the length of any corresponding cycles in graphs $G_B^*$ and $G_W^*$, and also preserves the number of consolidated edges crossings and their weights.
	\par 
	Therefore, for $i>1$ in $CB_i(w,r)$ (and $CW_i(w,r)$ respectively) there is a one-to-one correspondence between cycle summands in the definition of the functions. The rest of the proof is in the spirit of the proof of invariants of our previous (weaker) invariant $WRP$ (see \cite{Jab24}). Namely, it is sufficient to prove that the corresponding cycles preserve the product of the weights of all their edges (that forms the cycle).
	\par
 	This can be seen in the general situation in Figure \ref{rys17_3} for the cases of black or white graphs $G_B^*$ and $G_W^*$ we are considering, that only the relative position of edges can vary between corresponding cycles so the product is preserved (as a commutative operation).  We can easily conclude that if any cycle from $G_1$ does not go through both $v_1$ and $v_2$ then there is a one-to-one correspondence between cycles in $G_1$ and cycles in $G_2$ and each corresponding cycle preserves its number of edges and their weights. It is because the cycles are either outside the shown flype tangle (so they do not change) or they are completely inside a tangle $R$ (plus eventually touching the unlabeled dot in the figure).
	\par 
	When a cycle in $G_1$ passes through both $v_1$ and $v_2$ then there is a corresponding cycle in $G_2$ that in Case I: preserves its number, its order, and their weight of edges involved in the cycle, in Case II: for all edges between $v_1$ and $v_2$, the weights are preserved but the order may vary so this does not change the product of weights in the cycle.

\end{proof}

\section{Properties}\label{s2}

Let us consider the classical alternating knots and non-split link polynomial invariants: HOMFLYPT \cite{HOMFLY85, PrzTra88}, Kauffman $3$-variable \cite{Kau89, Laf13} and Kauffman $2$-variable \cite{Kau90} polynomials, and call them here: HOMFLYPT, Kauffman3v and Kauffman2v respectively.
\par 

\noindent
	\begin{center}
		\begin{figure}[h!b]
\begin{tikzcd}[column sep=1.75cm, row sep=4cm]
	&&\text{CWR}\arrow[bend left=10,pos=.7]{d}{12a24, 12a299}\arrow[bend left=30 ,pos=.4]{ddrr}{11a67,11a317}\arrow[bend left=-15,pos=.4]{ddll}{11a30, 11a189m}&&\\
	&&\text{HOMFLYPT}\arrow[bend left=10]{u}{?}\arrow[bend left=-1,pos=.2]{drr}{12a171, 12a217m}\arrow[bend left=17,pos=.6]{dll}{11a189, 11a30m}&&\\
	\text{Kauffman2v}\arrow[bend left=5,swap]{rrrr}{13a227, 13a1016m}\arrow[bend left=30]{uurr}{?}\arrow[bend left=-1,pos=.8]{urr}{11a1,11a149}&&&&\text{Kauffman3v}\arrow[bend left=10]{llll}{13a219, 13a2738 }\arrow[bend left=-15]{uull}{?}\arrow[bend left=17,pos=.6]{ull}{12a5, 12a343}\\
\end{tikzcd}
\vspace{-3cm}
	\caption{Invariants and relations}
\label{g1}
\end{figure}
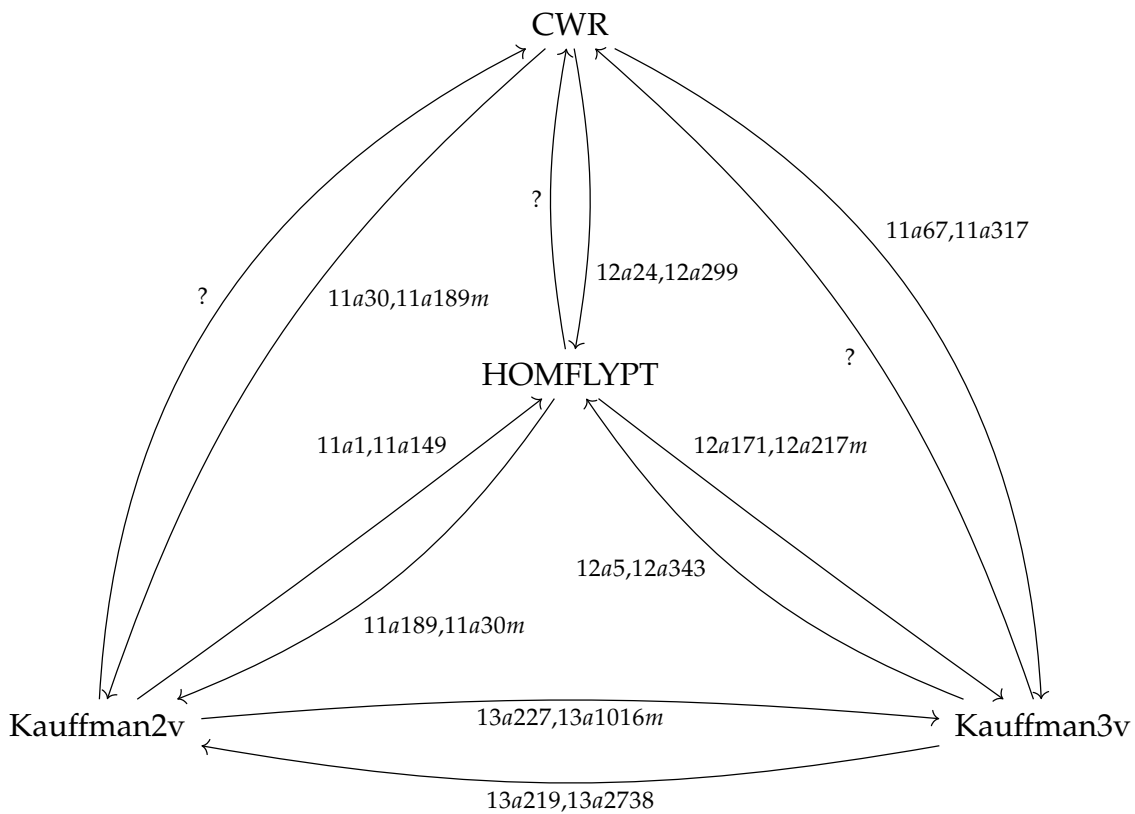
	\end{center}
	
Figure \ref{g1} shows the HOMFLYPT, Kauffman3v, and Kauffman2v invariant examples, illustrating that these invariants are generally non-comparable. An arrow $A\rightarrow B$ means that the invariant $A$ distinguishes the knots indicated in the arrow label and $B$ does not. The letter $m$ after a knot name stands for the mirror image of that knot. There are examples (shown in Figure \ref{g1}) where $CWR$ invariant is stronger than the mentioned other three invariants. But we cannot find examples when $CWR$ is strictly weaker, we have computationally searched in the knot tables of knots \underline{up to $14$ crossings}, including their mirror images. Examples in the labels were found using \cite{KAtlas, SnapPy, SageMath}.
\par
In addition, we find that for a knotted pair $K12a24$ and $K12a299$ are indistinguishable by HOMFLYPT, Kauffman3v, Kauffman2v, signature, unknotting number, bridge index, $4$-genus, Khovanov and Knot Floer homologies \cite{KAtlas, SnapPy, SageMath} (see \cite{LivMoo23} for a few other invariants that do not tell these knots apart). To distinguish we can use our $CWR$ invariant, we have:

$CWR(K12a24) = ((4r + 8w, w^3 + 2r^2 + w^2 + 3w), (r^2w, r^2w^4 + r^2w^3 + r^2w^2),\\ (2w^4, r^4w^3 + w^7), (3r^2w^3, r^2w^7), (r^4w^2, 0), (2r^2w^5, 0), (r^4w^4, 0))$, and

$CWR(K12a299) = ((4r + 8w, w^3 + 2r^2 + w^2 + 3w), (w^3, r^2w^4 + r^2w^3 + r^2w^2),\\ (2r^2w^2, r^4w^4 + w^5), (2r^2w^3 + w^5, r^2w^7), (r^4w^2, 0), (r^4w^3 + r^2w^5, 0), (r^2w^6, 0)).$

\begin{figure}[h!t]
	\begin{center}
		\includegraphics[width=6cm]{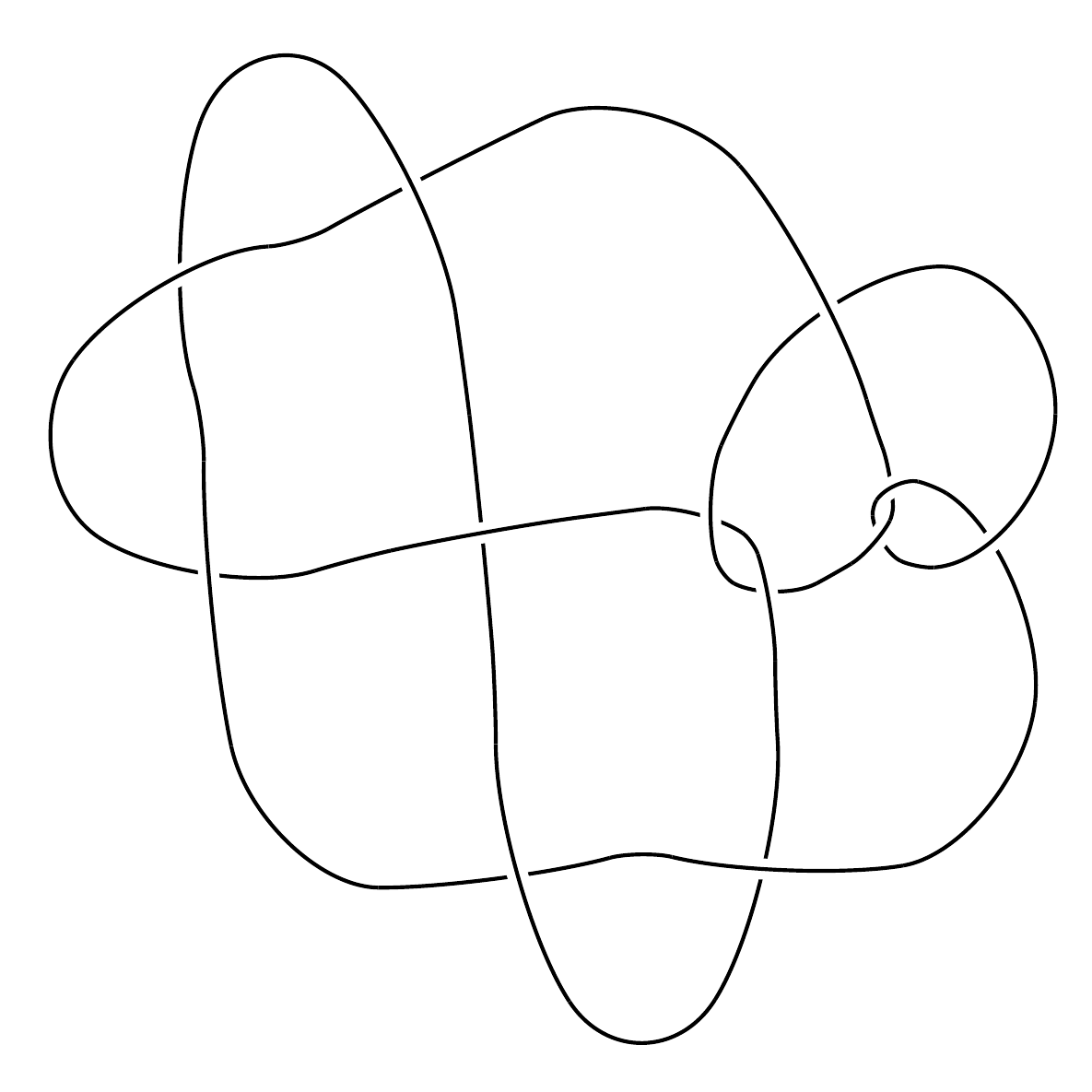}
		\includegraphics[width=6cm]{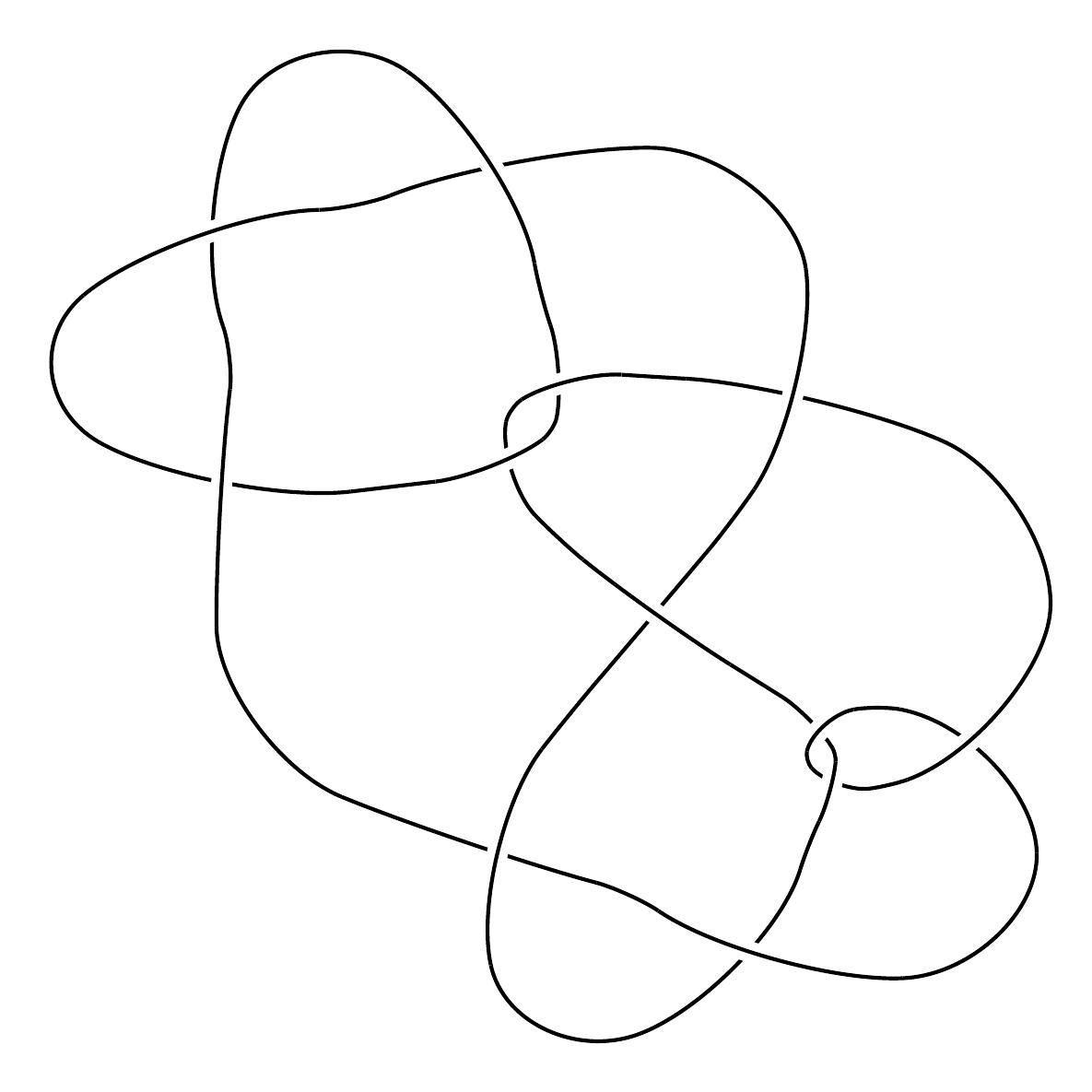}
		\caption{Knots $K12a24$ and $K12a299$.\label{rys2}}
	\end{center}
\end{figure}

\subsection{Generalization of the other invariants}

With our definition, $CWR$ invariant generalize the $WRP$ invariant defined in one of our previous papers \cite{Jab24} as follows:
$$WRP(w,r)=\left\{CWR_2(w^2,r^2)+2\cdot\sum_{i>2} CWR_i(w,r)\right\},$$
where the curly brackets mean taking the ordered tuple to an unordered tuple, and the sum of tuples is performed as the sum of corresponding coordinates in the tuples.
\par 
$CWR$ invariant is strictly stronger than $WRP$, for example, the pair $K11a75$ and $K11a102$ cannot be distinguished by $WRP$ as shown in \cite{Jab24} (together with their diagrams and resulting graphs), but $CWR$ tell them apart, because:

$CWR(K11a75) = ((w^3 + 4r + 4w, w^3 + 2r^2 + 4w),
(r^2w^3 + r^2w, r^2w^4), (0, 0),\\
(w^7, r^2w^6),
(r^2w^6 + r^2w^4, r^4w^4),
(r^4w^3, 0))$, and

$CWR(K11a102) = ((w^3 + 4r + 4w, w^3 + 2r^2 + 4w),
(r^2w, 0),
(r^2w^4, r^4w^4),\\
(r^4w^3 + w^7 + r^2w^3, r^2w^6 + r^2w^4),
(r^2w^6, 0)).$
\par 

From $CWR(L)$ invariant, we can naturally read the crossing number $cr(L)$ and $writhe(L)$, we have the following.

\begin{proposition} For any alternating non-split link $L$:
$$cr(L) = \left.\frac{\partial CB_2(L)}{\partial w}\right|_{w=1} +\left.\frac{\partial CB_2(L)}{\partial r}\right|_{r=1} = \left.\frac{\partial CW_2(L)}{\partial w}\right|_{w=1} +\left.\frac{\partial CW_2(L)}{\partial r}\right|_{r=1},$$

$$writhe(L) = \left.\frac{\partial CB_2(L)}{\partial w}\right|_{w=1} -\left.\frac{\partial CB_2(L)}{\partial r}\right|_{r=1} = \left.\frac{\partial CW_2(L)}{\partial w}\right|_{w=1} -\left.\frac{\partial CW_2(L)}{\partial r}\right|_{r=1},$$

where the $writhe(L)$ is the total number of positive crossings minus the total number of negative crossings in a reduced alternating diagram of $L$, and $cr(L)$ is the total number of crossings in a reduced alternating diagram of $L$.
\end{proposition}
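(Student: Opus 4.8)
The plan is to unwind the definitions of $CB_2$ and $CW_2$ in terms of the crossings of a single reduced alternating diagram, differentiate, and then recognize the resulting numbers as the two most elementary quantities attached to such a diagram: its total number of crossings and its writhe.

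First I would fix a reduced alternating diagram $D$ of $L$; by Theorem~\ref{twA} the polynomials $CB_2(L)=CB_2(D)$ and $CW_2(L)=CW_2(D)$ are independent of this choice. Let $n_+$ and $n_-$ denote the numbers of positive and negative crossings of $D$, so that $n_++n_-=c(D)$ and $n_+-n_-=writhe(D)$. By the definitions in Section~\ref{s1}, every edge $e$ of $G_B^*$ carries weight $wg(e)=w^{p_e}r^{q_e}$, where $p_e$ (respectively $q_e$) is the number of positive (respectively negative) crossings of $D$ consolidated into $e$; since each crossing of $D$ contributes exactly one edge of $G_B$, we get $\sum_e p_e=n_+$ and $\sum_e q_e=n_-$, and the identical identities hold for $G_W^*$. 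Using that no consolidated edge mixes crossings of both signs, i.e. for each $e$ either $q_e=0$ or $p_e=0$, we obtain
$$CB_2(w,r)=\sum_{e:\,q_e=0}w^{p_e}+\sum_{e:\,p_e=0}r^{q_e},$$
hence $\partial CB_2/\partial w=\sum_{e:\,q_e=0}p_e\,w^{p_e-1}$, whose value at $w=1$ is $\sum_{e:\,q_e=0}p_e=n_+$, and likewise $\partial CB_2/\partial r$ at $r=1$ is $n_-$. Adding these yields $n_++n_-=c(D)$, which equals $cr(L)$ because a reduced alternating diagram is crossing-minimal (a classical theorem, see \cite{Men21}); subtracting yields $n_+-n_-=writhe(D)$, which equals $writhe(L)$ since the writhe of a reduced alternating diagram is a link invariant (by the Tait conjectures, or directly from Theorem~\ref{twA}). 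Carrying out the same computation with $G_W^*$ in place of $G_B^*$ produces the very same two integers, which gives the remaining equalities in the proposition.

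The only point that needs genuine justification is the claim that a single consolidated edge never contains crossings of both signs. I would deduce it from the structure of multiple edges in the Tait graphs of a reduced alternating diagram: the crossings collapsed into one edge form a twist region (a chain of bigons of the opposite colour), and inside a twist region all crossings acquire the same sign with respect to any orientation — a short local check at the two crossings of a bigon shows that the over/under exchange forced by the alternating condition is exactly compensated by the interchange of the two strands, so the crossing sign is unchanged. Consequently each $wg(e)$ is a pure power of $w$ or of $r$ and the differentiations above are unambiguous. I expect this twist-region/constant-sign bookkeeping to be the main (and essentially the only) obstacle; once it is secured, the proposition is a one-line calculation. If one instead reads the evaluations as the substitution $w=r=1$, the statement follows immediately from the chain rule without this input: $\partial CB_2/\partial w|_{(1,1)}\pm\partial CB_2/\partial r|_{(1,1)}$ equals $\frac{d}{dt}CB_2(t,t^{\pm1})|_{t=1}=\sum_e(p_e\pm q_e)$, which is $c(D)=cr(L)$ with the plus sign and $writhe(D)=writhe(L)$ with the minus sign, and the $G_W^*$ version is identical.
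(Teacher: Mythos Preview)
Your proof is correct and follows essentially the same route as the paper: both arguments hinge on the observation that every monomial appearing in $CB_2$ (and in $CW_2$) is a pure power of a single variable, after which the partial derivatives evaluated at $1$ simply return $n_+$ and $n_-$. The only difference is in how this unimodality is justified---the paper argues that a consolidated edge carrying crossings of both signs would permit a reducing Reidemeister~II move after a flype, contradicting minimality, whereas you give a direct local check that the crossings in a twist region all carry the same sign; your version is more explicit, and your closing remark that reading the evaluation at $(w,r)=(1,1)$ bypasses the issue altogether is a useful observation not made in the paper.
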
 

\begin{proof}
	The alternating knot (or link) invariants $cr(L)$, $writhe(L)$, $CB_i$ and $CW_i$ can be calculated from the link $L$ reduced alternating diagram (\cite{Kau88, Mur87, Thi87, Thi88}). In that kind of diagram, we can simply count each crossing as the sum of powers in the formula for $CB_2$ or $CW_2$ expressed as a sum of unimodal monomials, where each monomial is a one-variable function, because otherwise it would not be reduced alternating diagram as it can be reduced by a Reidemeister II move after possibly a flype-move.
\end{proof}

It is also useful in increasing efficiency when searching for different knots with the same $CWR$ invariant as they must have the same crossing number (and there are finitely many such knots).
\par
When counting the product of weights for the cycle contribution in the formula for $CWR(L)$, the sum of powers of $w$ and $r$ in each monomial cannot be greater than $cr(L)$ as this is the total number of edges in a considered graph (counting with multiplications for the consolidate edges).

\subsection{Connected sum}

$CWR$ is additive concerning the connected sum, we have the following.

\begin{theorem}
	If $K_1$ and $K_2$ are alternating knots, or non-split links, then
	$$CWR(K_1\#K_2)=CWR(K_1)+CWR(K_2),$$
	where the sum of tuples is performed as the sum of corresponding coordinates in the tuples.
\end{theorem}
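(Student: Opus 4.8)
The plan is to pass from the link $K_1\# K_2$ to a concrete connected-sum \emph{diagram}, to understand how the two Tait graphs of a connected sum are built from those of the summands, and then to invoke the elementary fact that a simple cycle cannot run through a cut vertex. First I would fix reduced alternating diagrams $D_1$ of $K_1$ and $D_2$ of $K_2$, colored according to the standing convention of Section \ref{s1}, and form $D=D_1\# D_2$ by cutting each $D_i$ along an edge adjacent to its unbounded region (on the distinguished component, in the link case) and rejoining the four free ends by a crossingless band. One then checks that $D$ is again a reduced alternating diagram of $K_1\# K_2$: it is connected and cannot be separated by a circle, since such a circle must meet the band; every crossing of $D$ is a crossing of $D_1$ or of $D_2$ with the same local picture and the same checkerboard colors on the four surrounding regions, so the band introduces no new type-$A$ region to check and $D$ still obeys the coloring convention, hence is alternating; and $D$ is reduced because a nugatory crossing of $D$ would restrict to a nugatory crossing of a summand. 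By Theorem \ref{twA} it then suffices to prove $CWR(D)=CWR(D_1)+CWR(D_2)$ coordinatewise (padding the shorter sequence with $(0,0)$'s as needed).

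Next I would analyse how the checkerboard graphs amalgamate, which is the step where the real content sits. Because the band carries no crossings, the crossings of $D$ are exactly those of $D_1$ together with those of $D_2$, so $E(G_B(D))=E(G_B(D_1))\sqcup E(G_B(D_2))$ and likewise for $G_W$, and each crossing keeps its weight $wg(e)\in\{w,r\}$ once $K_1\# K_2$ is oriented compatibly with $K_1$ and $K_2$. On the level of regions, modelling the connected sum on the $2$-sphere shows that exactly one black region of $D_1$ is amalgamated with one black region of $D_2$, and exactly one white region of $D_1$ with one white region of $D_2$ (the two regions abutting the cut arc on either diagram merge side-by-side, and colors are preserved across the colorless band; an Euler-characteristic count $F(D)=F(D_1)+F(D_2)-2$ confirms there are no other identifications). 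Hence $G_B(D)$ is the one-point union $G_B(D_1)\vee G_B(D_2)$ at the amalgamated black vertex $v_B$, and $G_W(D)=G_W(D_1)\vee G_W(D_2)$ at $v_W$. Consolidation of multiple edges is compatible with this: any family of parallel edges of $G_B(D)$ joins a pair of vertices lying entirely in one summand, since there are no edges between $G_B(D_1)\setminus\{v_B\}$ and $G_B(D_2)\setminus\{v_B\}$ and the edges at $v_B$ coming from the two summands have disjoint other endpoints; thus $G_B^*(D)=G_B^*(D_1)\vee G_B^*(D_2)$ at $v_B$, and similarly for the white graphs. The hard (or at least the only non-formal) part of the whole argument is exactly this translation — pinning down that the connected sum yields a one-point union in \emph{both} colorings simultaneously and that the glued diagram still satisfies the coloring convention; everything after it is bookkeeping.

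Finally I would use that in a one-point union $H_1\vee H_2$ the gluing vertex is a cut vertex, so every simple cycle lies entirely in $H_1$ or entirely in $H_2$ (a cycle using edges from both sides would have to visit the cut vertex twice, contradicting simplicity). Applying this to $G_B^*(D)=G_B^*(D_1)\vee G_B^*(D_2)$, for $i>2$ the simple $i$-cycles of $G_B^*(D)$ are precisely those of $G_B^*(D_1)$ together with those of $G_B^*(D_2)$, and each contributes the same monomial $\prod_e wg(e)$ on either side, so $CB_i(D)=CB_i(D_1)+CB_i(D_2)$; for $i=2$ the edge multisets add, so $CB_2(D)=CB_2(D_1)+CB_2(D_2)$, with the trivial-knot value $0$ making the degenerate cases consistent. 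Running the identical argument with $G_W^*$ gives $CW_i(D)=CW_i(D_1)+CW_i(D_2)$ for all $i\ge 2$. Therefore $CWR_i(D)=CWR_i(D_1)+CWR_i(D_2)$ for every $i$, and combining with Theorem \ref{twA} gives $CWR(K_1\# K_2)=CWR(K_1)+CWR(K_2)$.
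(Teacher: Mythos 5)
Your proposal is correct and follows essentially the same route as the paper: realize the connected sum by a crossingless band so that exactly one black and one white region amalgamate, identify each consolidated Tait graph of $K_1\#K_2$ as the one-point union of those of the summands, and use the cut-vertex observation that a simple cycle cannot pass through the gluing vertex twice, so cycles (and edges, for the $i=2$ case) simply add. Your version merely spells out more of the verification (that the glued diagram is reduced alternating, the Euler count, and compatibility with edge consolidation) than the paper does.
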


\begin{proof}
Let $G_1\vee G_2$ denote a graph obtained from graphs $G_1$ and $G_2$ by gluing them along one vertex (identification of one arbitrarily chosen vertex from each graph). This operation in this context of graphs is also called block sum. It can be straightforwardly examined that the connected sum operation $K_1\#K_2$ can have the reduced alternating projection such that it merges exactly one black region and one white region from each checkerboard coloring of complementary regions of reduced alternating diagrams for $K_1$ and $K_2$. From this operation we get that $G_B^*(K_1\#K_2) = G_B^*(K_1)\vee G_B^*(K_2)$ and $G_W^*(K_1\#K_2) = G_W^*(K_1)\vee G_W^*(K_2)$.  Moreover, in the resulting graphs, no two edges merge and no edge is canceled. There are no new cycles besides the ones completely embedded in graphs for $G_1$ and $G_2$ because each new closed path passing through the joining vertex must pass it twice (otherwise it is one of the old cycles) and that contradicts the assumption that the path is simple. Therefore the cycles as a set just adds in counting $CWR(K_1\#K_2)$, causing $CWR(K_1\#K_2)=CWR(K_1)+CWR(K_2).$
\end{proof}

\subsection{Non-mutant knots}

\emph{Mutant knots} are the knots such that they have diagrams that differ only by a (Conway's) mutation operation. The mutation operation modifies a knot by first cutting, then rotating (by $\pi$) and finally gluing arcs to preserve the number of components (that is one component in our knot case), the operation is performed in a $2$-tangle along a $2$–sphere embedded in $\mathbb{R}^3$ and intersecting transversely the knot in exactly four points.

\begin{proposition}
	If $K_1$ and $K_2$ a mutant pair, then $CWR(K_1) = CWR(K_2)$.
\end{proposition}

\begin{proof}

Let $D_1$ and $D_2$ be two reduced alternating knot diagrams that differ by a single mutation. There is a natural bijection between the crossings of $D_1$ and $D_2$ that induces a bijection on the edge sets of the Tait graphs of $D_1$ and $D_2$. In fact, the signed Tait graphs of $D_1$ and $D_2$ are $2$-isomorphic (see \cite{ChaKof08, Che13}).
\par
The diagrams can be checkerboard-colored, resulting in two types of regions (black and white). We can fix the graph $G_1$ as one of $G_B^*$ or $G_W^*$ for a diagram $D_1$ and the graph $G_2$ as the corresponding graph for a diagram $D_2$. First, we notice that the black and white corresponding regions before and after the mutation preserve their color. Second, we notice that the mutation preserves the length of any corresponding cycles in graphs $G_B^*$ and $G_W^*$, and also preserves the number of consolidated edges crossings and their weights.
\par 
Therefore, there is a one-to-one correspondence between cycle summands in the definition of $CWR$ invariant. It is sufficient to prove that the corresponding cycles preserve the product of the weights of all their edges (that forms the cycle). That follows directly from the corresponding part of the proof of Theorem \label{twA} but without a crossing $c$ indicated there.
	
\end{proof}

Our computations show that the $CWR$ invariant is strong enough to distinguish all non-mutant knots in the tables with up to $11$ crossings, including their mirror images (when the knot is chiral).
An example of a non-mutant pair (see \cite{Sto24}) that the $CWR$ invariant does not distinguish is $K12a29$ and $K12a113m$, shown in Figure \ref{rys3}. The $CWR$ invariant in their case is equal to
$((2r^2 + 4r + 4w, r^2 + w^2 + 6r + 2w),
(r^2w, r^2w^2 + 2r^3),\\
(r^6 + 4r^3w^2 + w^4, r^2w^3),
(2r^5w + r^2w^3, r^4w^3 + 2r^3w^3),
(0, 2r^5w^3 + r^4w^3),
(0, r^6w^3)).$

\begin{figure}[h!t]
	\begin{center}
		\includegraphics[width=6cm]{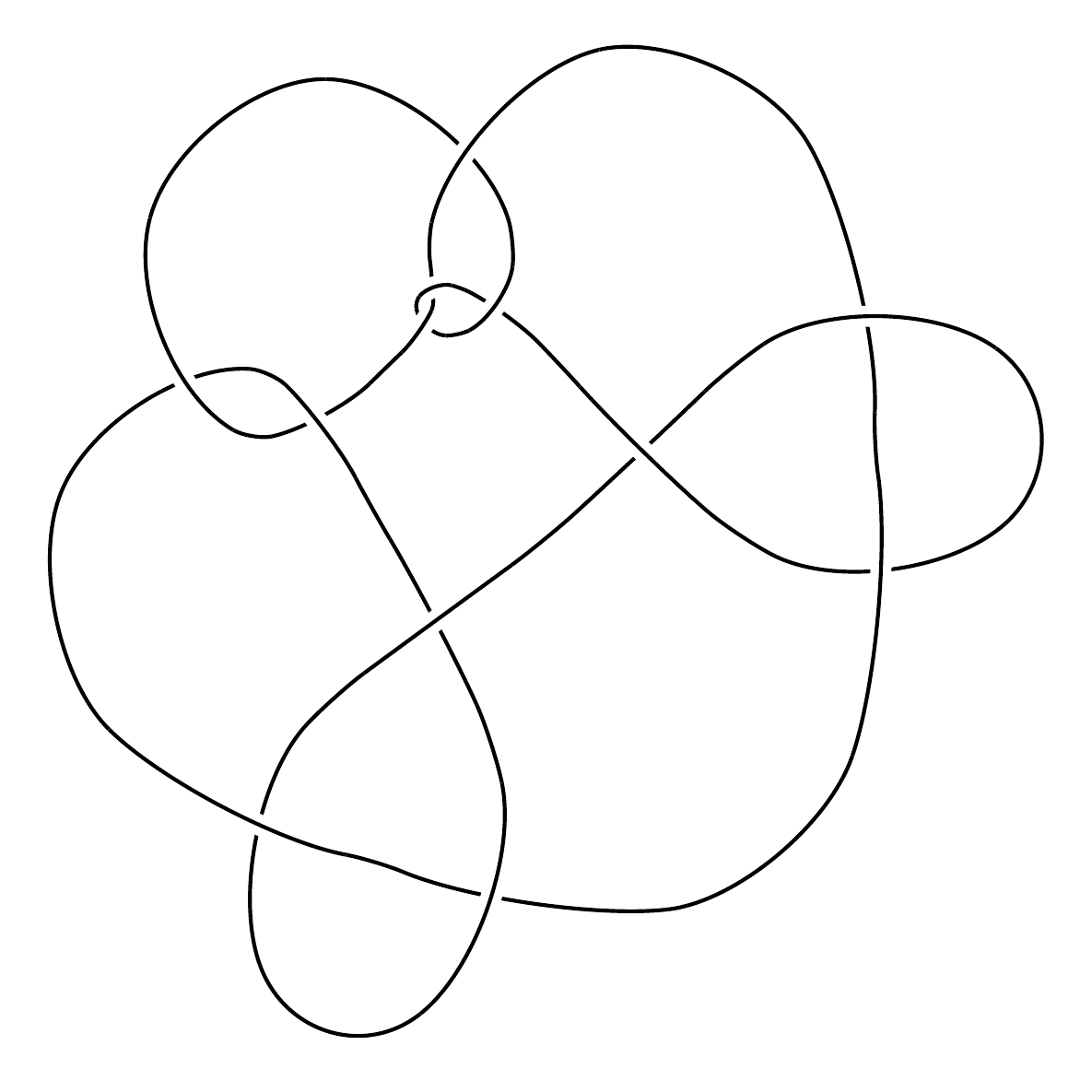}
		\includegraphics[width=6cm]{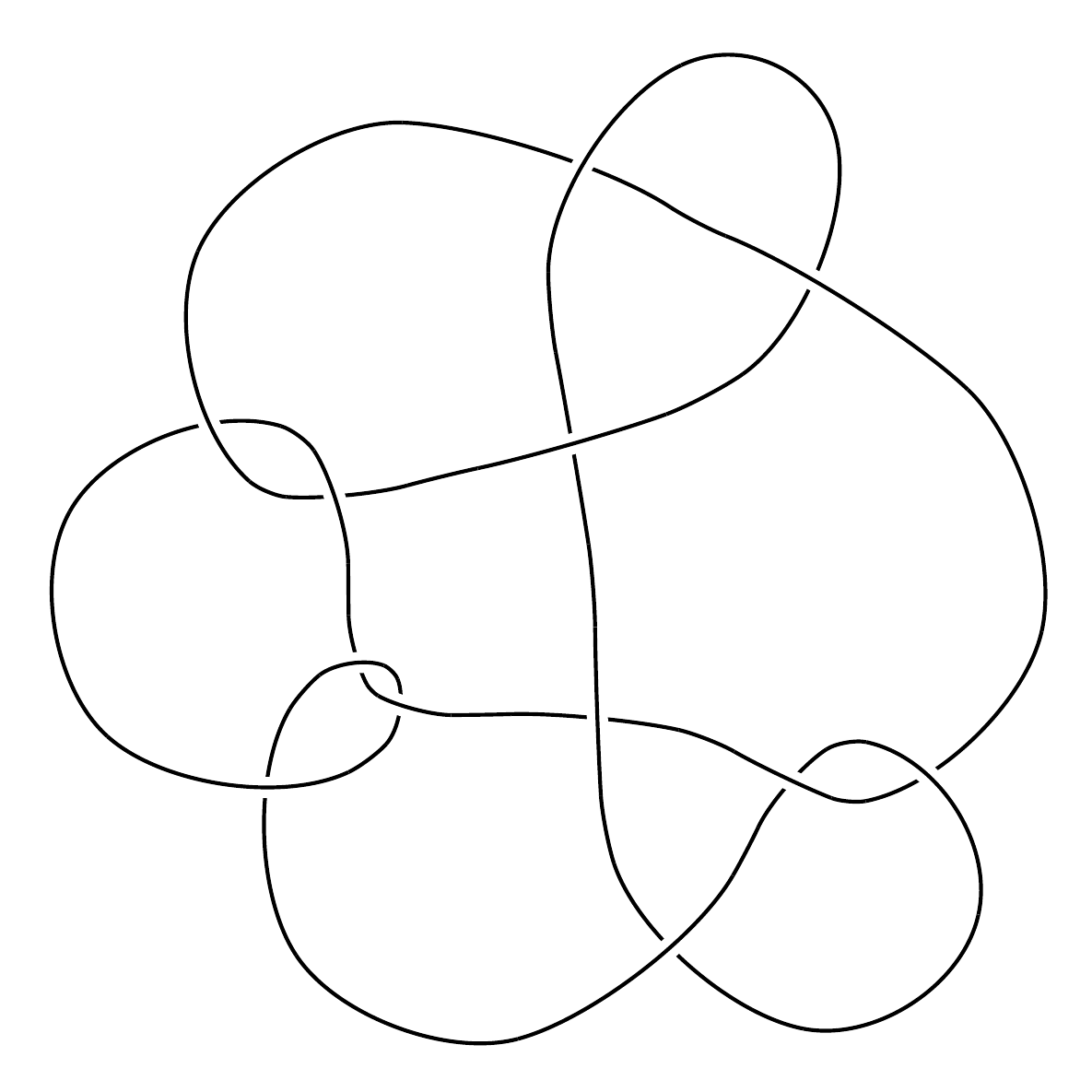}
		\caption{Knots $K12a29$ and $K12a113m$.\label{rys3}}
	\end{center}
\end{figure}

\subsection{Chirality}

We have the straightforward formula for the $CWR$ invariant for the mirror image of a given link knowing the $CWR$ value for the original link.
\begin{proposition}
	For any alternating link $L$, if we express $CWR_i$ as \\ $CWR_i(w,r)(L)=(CB_i(w,r),CW_i(w,r))$, then we have\\
	$CWR_i(w,r)(mirror(L))=(CW_i(r,w),CB_i(r,w))$, where $mirror(L)$ is the mirror\\ image of $L$.
\end{proposition}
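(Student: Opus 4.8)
The plan is to mimic, in a much simpler setting, the strategy of the proof of Theorem~\ref{twA}: understand how the two weighted Tait graphs $G_B^\ast$ and $G_W^\ast$ transform when one passes from a reduced alternating diagram to its mirror image, and then read off the effect on the edge sums (for $i=2$) and cycle sums (for $i>2$) that define $CB_i$ and $CW_i$. No new machinery beyond the definitions in Section~\ref{s1} should be needed.

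First I would fix a reduced alternating diagram $D$ of $L$ and take as a diagram of $mirror(L)$ the diagram $\overline D$ obtained from $D$ by switching every crossing; $\overline D$ is again reduced and alternating, so by Theorem~\ref{twA} the value $CWR(mirror(L))$ may be computed from it. Two elementary observations then carry the argument. (i) The underlying $4$-valent projection of $\overline D$ is identical to that of $D$, but switching a crossing exchanges the local ``type $A$'' picture of Figure~\ref{rys17_1} with the other one; hence, in order to respect the standing convention that type-$A$ regions are shaded black, the admissible checkerboard coloring of $\overline D$ is exactly the one that interchanges the black and the white regions of $D$. (ii) Reflection reverses the sign of every crossing, so a crossing carrying weight $w$ in $D$ carries weight $r$ in $\overline D$, and conversely.

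Combining (i) and (ii): the vertex set of $G_B^\ast(\overline D)$ is the set of white regions of $D$, its edges are the crossings of $D$, incident and consolidated exactly as in $G_W^\ast(D)$, and each resulting edge weight is obtained from the corresponding weight of $G_W^\ast(D)$ by the formal exchange $w\leftrightarrow r$. In other words $G_B^\ast(\overline D)$ is canonically isomorphic, as a weighted multigraph, to the graph obtained from $G_W^\ast(D)$ by swapping the variables $w$ and $r$; symmetrically $G_W^\ast(\overline D)$ is obtained from $G_B^\ast(D)$ by the same swap. Since the defining sums $\sum_{C}\prod_e wg(e)$, and the degenerate $i=2$ sums $\sum_e wg(e)$, depend only on the isomorphism type of the weighted multigraph, this isomorphism yields $CB_i(mirror(L))(w,r)=CW_i(L)(r,w)$ and $CW_i(mirror(L))(w,r)=CB_i(L)(r,w)$ for every $i$, which is the asserted formula for $CWR_i(w,r)(mirror(L))$.

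The only step that requires genuine care will be observation (i): one must verify that the coloring attached to $\overline D$ is really the one dictated by the type-$A$ convention, and not the coloring naively transported from $D$, since it is precisely this interchange of colors that produces the swap of the two coordinates of $CWR_i$. Everything else --- the reversal of crossing signs, the compatibility of edge consolidation (taking products of weights) with the variable exchange, and the invariance of the cycle/edge sums under a weighted-graph isomorphism --- is routine and is already implicit in the argument for Theorem~\ref{twA}.
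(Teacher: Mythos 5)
Your proof is correct and follows essentially the same route as the paper's: replace $D$ by the diagram with all crossings switched, observe that the type-$A$ convention forces the black/white coloring to interchange so that the two Tait graphs swap roles, and that every edge weight exchanges $w\leftrightarrow r$ because all crossing signs reverse. Note only that what you (correctly) derive is $CWR_i(w,r)(mirror(L))=(CW_i(r,w),CB_i(r,w))$, whose second coordinate differs from the literal statement $(CW_i(r,w),CB_i(w,r))$; the statement appears to contain a typo in that coordinate, since the paper's own proof likewise swaps the variables in both components (as a check, the table's value for $K3a1$ forces the mirror to have second coordinate $3r=CB_2(r,w)$, not $3w=CB_2(w,r)$).
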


\begin{proof}
	Let $D_m$ be a reduced alternating diagram for $mirror(L)$ obtained by switching crossing types of every crossing in a given reduced alternating diagram $D$ for $L$. Therefore, the diagrams $D$ and $D_m$ have corresponding regions in the projection complement but with switched colors, therefore the black and white graphs are exchanged. All cycles geometrically stay unchanged and the weights of each edge in a cycle are also switched, replacing variable $r$ with $w$ and vice versa because switching crossing types changes the sign of a crossing to the opposite. 
\end{proof}

In the set of alternating prime knots up to $13$ crossings, the $CWR(L)$ can tell apart a knot and its mirror image if it is chiral (see \cite{Sto24}). This is in contrast to invariants HOMFLYPT, Kauffman3v, and Kauffman2v which cannot distinguish between the knot $K10a10$ from its mirror image.
\par
The knot $K14a506$ is not equivalent to its mirror image, but\\ $CWR(K14a506)=CWR(K14a506m)$.

\subsection{Matrix formulae for $CWR_2$ and $CWR_3$}

The \emph{weighted adjacency matrix} of a (simple) graph $G$ without loops, is the symmetric matrix $\overline{A}(G)$ whose rows and columns are indexed by some consistent orderings of vertices $v_i$ such that $\overline{A}_{i,j}(G)=weight(e)$ if vertices $v_i$ and $v_j$ are endpoints of an edge $e$ in $G$ and $0$ otherwise. Define also the \emph{adjacency matrix} $A$ as a weighted adjacency matrix where all edge weights are equal to $1$.

\begin{proposition}
	Let $\overline{A}_B$ (resp. $\overline{A}_W$) be the weighted adjacency matrices for a graph $G_B^*$ (resp. $G_W^*$) of an alternating link $L$, and $A_B$ (resp. $A_W$) their corresponding adjacency matrices respectively.
	
	$$CWR_2(L)=\left(\text{trace}(\overline{A}_B\cdot A_B)/2,\;\;\text{trace}(\overline{A}_W\cdot A_W)/2\right),$$
	$$CWR_3(L)=\left(\text{trace}(\overline{A}_B^3)/6,\;\;\text{trace}(\overline{A}_W^3)/6\right).$$
\end{proposition}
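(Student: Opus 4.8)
The plan is to unwind each trace expression directly as a weighted count of short closed walks in the simple graphs $G_B^*$ and $G_W^*$, and then match that count against the combinatorial definitions of $CB_2, CB_3, CW_2, CW_3$. By Theorem \ref{twA} all four of these quantities may be computed from the Tait graphs of any one reduced alternating diagram $D$ of $L$, so I would fix such a $D$. For it, $G_B^*$ and $G_W^*$ are simple graphs (multiple edges are consolidated by definition) with no loops: a loop in $G_B$ or $G_W$ would be a crossing at which a single shaded (resp.\ unshaded) region meets itself, which is precisely a nugatory crossing, and a reduced diagram has none. Hence the weighted adjacency matrices $\overline{A}_B,\overline{A}_W$ and adjacency matrices $A_B,A_W$ are well defined, and it suffices to compute their traces.

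For $CWR_2$, I would observe that $\overline{A}_B$ and $A_B$ are symmetric and supported on the same pairs of indices, so the $(i,i)$ entry of $\overline{A}_B A_B$ is $\sum_j \overline{A}_{B,ij}A_{B,ji}=\sum_{e\ni v_i} wg(e)$, the weighted degree of $v_i$. Summing over $i$ counts each edge exactly twice, once at each endpoint, so $\text{trace}(\overline{A}_B A_B)=2\sum_{e\in E(G_B^*)} wg(e)=2\,CB_2(L)$; dividing by $2$ recovers the first coordinate. (The role of $A_B$, rather than a second copy of $\overline{A}_B$, is exactly to turn $wg(e)^2$ into $wg(e)$.) The identical argument for $G_W^*$ gives the second coordinate, and the case of the trivial knot, where there are no edges, yields $0$ in accordance with the stated convention.

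For $CWR_3$, I would expand $(\overline{A}_B^3)_{ii}=\sum_{j,k}\overline{A}_{B,ij}\overline{A}_{B,jk}\overline{A}_{B,ki}$, which sums, over all closed walks $v_i\to v_j\to v_k\to v_i$ of length $3$, the product of the weights of the three traversed edges. Since $G_B^*$ is simple and loopless, such a walk forces $v_i,v_j,v_k$ pairwise distinct, i.e.\ it is a triangle traversed in one of its two cyclic orientations; hence each triangle $T$ is counted six times in $\text{trace}(\overline{A}_B^3)=\sum_i(\overline{A}_B^3)_{ii}$ (three base vertices times two orientations), giving $\text{trace}(\overline{A}_B^3)=6\sum_T\prod_{e\in T}wg(e)=6\,CB_3(L)$, and likewise for $G_W^*$; when the graph has no triangle the trace is $0$, matching $CWR_3=(0,0)$. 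This argument is routine and there is no real obstacle; the only points needing care are the structural remarks that $G_B^*,G_W^*$ are simple and loopless — so that ``closed walk of length $3$'' coincides with ``directed triangle'' and the matrices exist in the first place — and getting the multiplicities $2$ and $6$ right, which come from the number of (base vertex, orientation) presentations of an undirected edge and an undirected triangle.
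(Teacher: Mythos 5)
Your proof is correct and follows essentially the same route as the paper: both arguments unwind $\text{trace}(\overline{A}_B A_B)$ as twice the sum of edge weights and $\text{trace}(\overline{A}_B^3)$ as six times the weighted triangle count, with the factor $6$ coming from the three base vertices and two orientations of each unoriented triangle, and with $A_B$ used in place of a second $\overline{A}_B$ precisely to avoid squaring edge weights. Your explicit justification that $G_B^*$ and $G_W^*$ are simple and loopless (consolidation of multiple edges, absence of nugatory crossings) is a worthwhile detail that the paper leaves implicit when it invokes the standard walk-counting result.
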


\begin{proof}
	It follows from a standard Graph Theory argument, see for example \cite[Theorem 2.5.6]{KnaKol19}. Counting the number of closed directed paths of length $2$ and $3$ in a graph $G$ equal $\text{trace}(A^2)/2$ and $\text{trace}(A^3)/3$ respectively for a standard adjacency matrix $A$ of a graph $G$. We need only to consider weights. In case $CWR_2$ we count paths of length $2$ from each vertex to itself, so counting one edge with weight from $\overline{A}$ and one with weight $1$ back to be consistent with the definition of multiplication of this to weights equal just single weight. In the case of $CWR_3$, we count only simple paths (cycles) of length $3$ that are multiplied when forming a cycle invariant and added when summing all cycles in our invariant $CWR_3$, which is consistent with matrix multiplications in this context. The denominator here is $6$ because we consider unoriented graphs so the closed path in one direction is the same as the path in the opposite direction.
\end{proof}

\begin{example}
Let us calculate with the above method $CWR_2(K)$ and $CWR_3(K)$ invariants for knot $K=K7a1$ from its diagram shown in Figure \ref{K7a1}.

We have for the graphs $G_B^*$ and $G_W^*$ in this figure that $\overline{A}_B=\begin{bmatrix}
	0&0&r&r&0\\
	0&0&0&r&r\\
	r&0&0&w&w\\
	r&r&w&0&w\\
	0&r&w&w&0
	
\end{bmatrix}$,  \\ \; \\
$\overline{A}_W=\begin{bmatrix}
	0&r^2&r^2&w\\
	r^2&0&0&w\\
	r^2&0&0&w\\
	w&w&w&0
	
\end{bmatrix}$,
$\overline{A}_B\cdot A_B=\begin{bmatrix}
2r& r& r& r& 2r\\
r& 2r& 2r& r& r\\
w& 2w& r + 2w& r + w& w\\
w& w& r + w& 2r + 2w& r + w\\
2w& w& w& r + w& r + 2w
\end{bmatrix}$, \\ \; \\

$\overline{A}_W\cdot A_W=\begin{bmatrix}

	2r^2 + w& w& w& 2r^2\\
	w& r^2 + w& r^2 + w& r^2\\
	w& r^2 + w& r^2 + w& r^2\\
	2w& w& w& 3w
\end{bmatrix}$,\\ \; \\

$\overline{A}_B^3=\begin{bmatrix}
        2r^2w      &   3r^2w &2r^3 + 3rw^2& 3r^3 + 3rw^2&   r^3 + 2rw^2\\        3r^2w     &    2r^2w&   r^3 + 2rw^2 &3r^3 + 3rw^2& 2r^3 + 3rw^2\\2r^3 + 3rw^2  & r^3 + 2rw^2 &2r^2w + 2w^3 &4r^2w + 3w^3& 4r^2w + 3w^3\\3r^3 + 3rw^2& 3r^3 + 3rw^2 &4r^2w + 3w^3& 4r^2w + 2w^3& 4r^2w + 3w^3\\  r^3 + 2rw^2& 2r^3 + 3rw^2& 4r^2w + 3w^3& 4r^2w + 3w^3& 2r^2w + 2w^3
\end{bmatrix}$,\\ \; \\

$\overline{A}_W^3=\begin{bmatrix}
	4r^2w^2& 2r^6 + 3r^2w^2& 2r^6 + 3r^2w^2& 2r^4w + 3w^3\\
	2r^6 + 3r^2w^2& 2r^2w^2& 2r^2w^2& 2r^4w + 3w^3\\
	2r^6 + 3r^2w^2& 2r^2w^2& 2r^2w^2& 2r^4w + 3w^3\\
	2r^4w + 3w^3& 2r^4w + 3w^3& 2r^4w + 3w^3& 4r^2w^2
\end{bmatrix}$.\\ \; \\

Therefore $CWR_2(K) = \left(\text{trace}(\overline{A}_B\cdot A_B)/2,\;\;\text{trace}(\overline{A}_W\cdot A_W)/2\right) =  (4r + 3w, 2r^2 + 3w)$ and 
$CWR_3(K) = \left(\text{trace}(\overline{A}_B^3)/6,\;\;\text{trace}(\overline{A}_W^3)/6\right) = (2r^2w + w^3, 2r^2w^2).$

\end{example}

\section{Recursive relations}\label{s3}

For an integer $n>0$, let the alternating reduced non-split links $Btw_{2n+1}$,  $Wtr_{2n+1}$, $t_a$, $t_b$ and $t_c$ differ only locally by changing tangles as shown in Figure \ref{rys17_4}. We can derive the following recursive "skein" relations for $CWR_i(w, r)=(CB_i(w,r), CW_i(w,r))$, with the restrictions on diagrams.

\begin{figure}[h!t]
	\begin{center}
		\includegraphics[width=10cm]{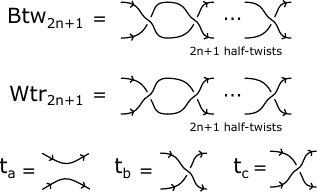}
		\caption{Definition of used tangles.\label{rys17_4}}
	\end{center}
\end{figure}

\begin{theorem}
	Assume that all diagrams used in the following relations are reduced, alternating, and non-split. Then, we have for all integers $k\geq 2, n \geq 0$:
	
\begin{enumerate}

	\item 	$CB_{k}(Wtr_{2n+1}) = r^{2n}CB_{k}(t_c)+(1-r^{2n})CB_{k}(t_a),$
	\item 	$CW_{k}(Btw_{2n+1}) = w^{2n}CW_{k}(t_b)+(1-w^{2n})CW_{k}(t_a).$
	
\end{enumerate}

\end{theorem}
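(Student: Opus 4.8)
The plan is to analyze the Tait graphs under the local tangle replacement shown in Figure~\ref{rys17_4} and track how the cycle-summands of $CB_k$ and $CW_k$ transform. I will prove relation (1); relation (2) follows by the mirror/duality symmetry (swapping black/white graphs and $w\leftrightarrow r$, as in the Chirality proposition) applied to the analogous family where a sequence of like crossings sits in the complementary checkerboard region. So fix attention on $Wtr_{2n+1}$, in which the local tangle is a vertical twist region of $2n+1$ negative crossings (contributing weight $r$), and on the two ``limiting'' tangles $t_a$ and $t_c$ that resolve this twist region in the two essentially different ways.

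First I would identify the relevant piece of $G_B^*$. A vertical twist region of $2n+1$ negative crossings between two white regions produces, in the black checkerboard graph $G_B$, a path of $2n+1$ parallel edges of weight $r$ running between the two black vertices $u,v$ that border the twist region on its two sides; after consolidation into $G_B^*$ this becomes a \emph{single} edge $e_0$ of weight $r^{2n+1}$ joining $u$ and $v$. (The parity $2n+1$ is what guarantees the twist region, when alternating, connects $u$ to $v$ rather than creating a chain of bigon-vertices; this is exactly where the ``reduced, alternating'' hypothesis is used, and where a flype may first be needed to put the twist region in standard position.) The tangle $t_c$ is the resolution in which this twist region is replaced by a single crossing of the same sign: in $G_B^*$ this replaces $e_0$ by an edge $e_c$ of weight $r$ between the same vertices $u,v$, with all of the rest of the graph unchanged. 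The tangle $t_a$ is the resolution collapsing the twist region so that $u$ and $v$ are \emph{identified} (the edge is contracted), with the rest of the graph again unchanged. Thus $G_B^*(Wtr_{2n+1})$, $G_B^*(t_c)$, $G_B^*(t_a)$ agree away from this one edge, differing only in whether the $u$--$v$ edge has weight $r^{2n+1}$, has weight $r$, or is contracted.

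Next I would split the cycle sum $CB_k = \sum_{C}\prod_{e\in C} wg(e)$ for $Wtr_{2n+1}$ according to whether a cycle $C$ of length $k$ uses the special edge $e_0$. Cycles \emph{not} using $e_0$ are in weight-preserving bijection with length-$k$ cycles in $G_B^*(t_c)$ not using $e_c$, and (since contracting an edge not lying on $C$ does not change $C$) also with length-$k$ cycles in $G_B^*(t_a)$; write their total contribution as $S$. Cycles using $e_0$: removing $e_0$ from such a $C$ leaves a path of length $k-1$ from $u$ to $v$, which corresponds bijectively to a path of length $k-1$ from $u$ to $v$ in the other two graphs, i.e.\ to a length-$k$ cycle through $e_c$ in $G_B^*(t_c)$ and to a length-$(k-1)$ cycle in $G_B^*(t_a)$ (the endpoints $u,v$ having been identified — here one must check $k-1\ge 2$, i.e.\ $k>2$, so the contracted object is still a genuine cycle, and check no new coincidences of edges are created, which holds because the rest of the graph is untouched and $u\ne v$ in the original). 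If $P$ denotes the product of weights along such a path, then its contribution to $CB_k(Wtr_{2n+1})$ is $r^{2n+1}P$, to $CB_k(t_c)$ is $rP$, and to $CB_k(t_a)$ is $P$; summing over all such paths gives contributions $r^{2n+1}T$, $rT$, and $T$ respectively for a common quantity $T$. Hence $CB_k(t_c) = S + rT$ and $CB_k(t_a) = S + T$, so $r^{2n}CB_k(t_c) + (1-r^{2n})CB_k(t_a) = S + r^{2n}\cdot rT + (1-r^{2n})T = S + r^{2n+1}T = CB_k(Wtr_{2n+1})$, which is the claimed identity.

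The main obstacle I anticipate is \emph{verifying that the three graphs really do differ only in the single edge $e_0$ and that no edge-consolidations or cycle-coincidences are introduced by the $t_a$ contraction}: one must rule out that contracting $u$--$v$ makes two formerly distinct edges parallel (which would force a re-consolidation and change weights and cycle counts) and that it does not create short cycles of length $<k$ that would have to be re-accounted. This is a combinatorial case-check on the local picture of Figure~\ref{rys17_4}, and it is precisely where the hypotheses ``reduced, alternating, non-split,'' together with the freedom to apply a flype first (Theorem~\ref{twA}), are doing the work — they ensure the twist region is isolated in the standardized position assumed by the figure. The remainder is the bookkeeping carried out above, which is routine once the local graph model is pinned down.
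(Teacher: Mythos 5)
Your overall strategy --- locate the consolidated edge $e_0$ of weight $r^{2n+1}$ joining the two black vertices $u,v$ in $G_B^*(Wtr_{2n+1})$, split the length-$k$ cycles according to whether they use $e_0$, and compare with $t_c$, where the same edge has weight $r$ --- is exactly the paper's. But your treatment of $t_a$ breaks the argument. You model $t_a$ as the \emph{contraction} of the $u$--$v$ edge in the black graph and conclude $CB_k(t_a)=S+T$, where $T$ is the weighted sum over $u$--$v$ paths of length $k-1$ avoiding $e_0$. This is doubly problematic. First, it is internally inconsistent: under contraction a length-$(k-1)$ path from $u$ to $v$ closes up into a cycle of length $k-1$, so it contributes to $CB_{k-1}(t_a)$, not to $CB_k(t_a)$ (contraction also creates new length-$k$ cycles out of length-$k$ paths, and destroys the simplicity of $k$-cycles that pass through both $u$ and $v$ without using $e_0$). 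Second, and decisively, your closing computation is false as written: $r^{2n}(S+rT)+(1-r^{2n})(S+T)=S+r^{2n+1}T+(1-r^{2n})T$, which is not $S+r^{2n+1}T$ unless $(1-r^{2n})T=0$. The claimed identity only balances when $CB_k(t_a)=S$, i.e.\ when cycles through the tangle contribute nothing at all to $CB_k(t_a)$.

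That is what actually happens, and it is what the paper uses: $t_a$ is the crossingless tangle whose black Tait graph is obtained from $G_B^*(Wtr_{2n+1})$ by \emph{deleting} the consolidated edge $e_0$, leaving $u$ and $v$ as distinct vertices and the rest of the graph untouched, so that the length-$k$ cycles of $G_B^*(t_a)$ are precisely the length-$k$ cycles of $G_B^*(Wtr_{2n+1})$ that avoid the twist region. Then $CB_k(t_a)=S$, the through-cycles of $Wtr_{2n+1}$ and of $t_c$ are in a weight-preserving bijection up to the factor $r^{2n}$, and
$r^{2n}(S+rT)+(1-r^{2n})S=S+r^{2n+1}T=CB_k(Wtr_{2n+1})$
as required. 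With deletion in place of contraction, your anticipated obstacle (forced re-consolidation of newly parallel edges, creation of shorter cycles) also disappears, since deleting an edge can do neither. Your reduction of relation (2) to relation (1) via the black/white and $w\leftrightarrow r$ symmetry is consistent with the paper, which disposes of the second relation by the same symmetric argument.
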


\begin{proof}
	The proof goes by counting an invariant while taking care of the cases where the concerning cycles pass through given tangles shown in Figure \ref{rys17_4}.

	\par 
	Let us consider the first equation. The left-hand side $CB_{k}(Wtr_{2n+1})$ is calculated with all cycles of length $k$, the right-hand side is calculated considering cycles of length $k$ not passing through $Wtr_{2n+1}$ that is $CB_{k}(t_a)$ (from the assumption that $t_a$ and is reduced) and those cycles of length $k$ that passes through $Wtr_{2n+1}$ correspond one-to-one with the cycles that pass through $t_c$, because we have only one edge in $G_B^*$ passing through $Wtr_{2n+1}$ from the assumption that $Wtr_{2n+1}$ and is reduced. The weights of edges, that are considered in counting $(CB_{k}(t_c)-CB_{k}(t_a))$, change with the appropriate accumulated weigh scale by $r^{2n}$. Similar arguments are for the second equation, from the assumption that $Btw_{2n+1}$ and $t_b$ are reduced diagrams.
\end{proof}


\section{Values for knots in tables}\label{s4}
We computationally generate, the following Table\;\ref{table1} of the $CWR$ invariant of knots.

\noindent
\begin{footnotesize}
	\renewcommand{\arraystretch}{1.25}

		\begin{longtable}[ht]{r|r||l|}
						\caption{Knots and their $CWR$ invariant. \label{table1}}\\
			DT & Rolf	&  $CWR$ invariant\\
			\hline
			\endfirsthead
			\multicolumn{3}{c}
			{\tablename\ \thetable\ -- \textit{Continued from previous page}}\\
			DT & Rolf	&  $CWR$ invariant\\
			\hline
			\endhead
			\hline \multicolumn{3}{r}{\textit{Continued on next page}} \\
			\endfoot
			\hline
			\endlastfoot
			
			$K3a1$& $3_{1}$ & $((3w, w^3), (w^3, 0))$\\
			
			$K4a1$& $4_{1}$ & $((w^2 + 2r, r^2 + 2w), (r^2w^2, r^2w^2))$\\
			
			$K5a1$& $5_{2}$ & $ ((w^3 + 2w, w^2 + 3w), (w^5, 0), (0, w^5))$\\
			
			$K5a2$& $5_{1}$ & $((5w, w^5), (0,0), (0,0), (w^5, 0))$\\
			
			$K6a1$& $6_{3}$ & $((r^2 + r + 3w, w^2 + 3r + w), (r^3w + w^3, rw^3 + r^3), (r^3w^2, r^2w^3))$\\

			$K6a2$& $6_{2}$ & $((2r + 4w, w^3 + r^2 + w), (r^2w, r^2w^4), (w^4, 0), (r^2w^3, 0))$\\
			
			$K6a3$& $6_{1}$ & $((w^4 + 2r, r^2 + 4w), (r^2w^4, 0), (0,0), (0, r^2w^4))$\\
			
			$K7a1$& $7_{7}$ & $((4r + 3w, 2r^2 + 3w), (2r^2w + w^3, 2r^2w^2), (2r^2w^2, r^4w^2), (r^4w, 0))$\\
			
			$K7a2$& $7_{6}$ & $((w^2 + 2r + 3w, r^2 + w^2 + 3w), (w^3, r^2w^3 + r^2w^2), (r^2w^3, w^5), (r^2w^4, 0))$\\
			
			$K7a3$& $7_{5}$ & $((w^2 + 5w, w^3 + w^2 + 2w), (w^4, 0), (w^5, w^7), (w^5, 0))$\\
			
			$K7a4$& $7_{2}$ & $((w^5 + 2w, w^2 + 5w), (w^7, 0), (0,0), (0,0), (0, w^7))$\\
			
			$K7a5$& $7_{3}$ & $((w^3 + 4w, w^4 + 3w), (0,0), (0, w^7), (w^7, 0))$\\
			
			$K7a6$& $7_{4}$ & $((2w^3 + w, 7w), (w^7, 0), (0, 2w^4), (0,0), (0, w^6))$\\
			
			$K7a7$& $7_{1}$ & $((7w, w^7), (0,0), (0,0), (0,0), (0,0), (w^7, 0))$\\
	
			$K8a1$& $8_{14}$ & $((w^2 + 2r + 4w, r^2 + w^2 + 4w), (2w^4 + r^2w, r^2w^2), (r^2w^3 + w^4, w^5), (r^2w^3, r^2w^5))$\\
			
			$K8a2$& $8_{15}$ & $((w^2 + 6w, 2w^2 + 4w), (w^4 + 2w^3, 0), (2w^5, w^6 + 2w^5), (w^6, 0))$\\
			
			$K8a3$& $8_{10}$ & $((r^3 + r^2 + 3w, w^2 + 5r + w), (r^5w + w^3, 0), (r^5w^2, r^2w^3), (0, r^3w^3 + r^5))$\\
			
			$K8a4$& $8_{8}$ & $((r^2 + 3r + 3w, r^3 + w^2 + 2r + w), (w^3, r^3w^3 + r^5), (0, r^2w^3), (r^5w, 0), (r^5w^2, 0))$\\
			
			$K8a5$& $8_{12}$ & $((2w^2 + 4r, 2r^2 + 4w), (r^2w^2, r^2w^2), (r^2w^4, r^4w^2), (r^4w^2, r^2w^4))$\\
			
			$K8a6$& $8_{7}$ & $((r^4 + r + 3w, w^2 + 5r + w), (r^5w + w^3, rw^3), (r^5w^2, 0), (0, r^5), (0, r^4w^3))$\\
			
			$K8a7$& $8_{13}$ & $((5r + 3w, r^3 + w^2 + 2r + w), (r^2w + w^3, r^5 + rw^3), (r^4 + r^2w^2, r^4w^3), (r^4w, 0), (r^4w^2, 0))$\\
			
			$K8a8$& $8_{2}$ & $((2r + 6w, w^5 + r^2 + w), (r^2w, r^2w^6), (0,0), (0,0), (w^6, 0), (r^2w^5, 0))$\\
			
			$K8a9$& $8_{11}$ & $((w^3 + 2r + 3w, r^2 + w^2 + 4w), (r^2w, r^2w^3), (w^6, w^5), (r^2w^5, r^2w^4))$\\
			
			$K8a10$& $8_{6}$ & $((w^3 + 2r + 3w, w^3 + r^2 + 3w), (r^2w^3, 0), (w^6, 0), (r^2w^3, r^2w^6))$\\
			
			$K8a11$& $8_{1}$ & $((w^6 + 2r, r^2 + 6w), (r^2w^6, 0), (0,0), (0,0), (0,0), (0, r^2w^6))$\\
			
			$K8a12$& $8_{18}$ & $((4r + 4w, 4r + 4w), (4r^2w, 4rw^2), (4r^2w^2 + w^4, r^4 + 4r^2w^2), (4r^2w^3, 4r^3w^2))$\\
			
			$K8a13$& $8_{5}$ & $((2r + 6w, 2w^3 + r^2), (0, r^2w^6), (0,0), (2r^2w^3, 0), (w^6, 0))$\\
			
			$K8a14$& $8_{17}$ & $((r^2 + 2r + 4w, w^2 + 4r + 2w), (r^3w + r^2w, rw^3 + rw^2), (2r^3w^2 + w^4, 2r^2w^3 + r^4), $\\&&$(r^3w^3 + r^2w^3, r^3w^3 + r^3w^2))$\\
			
			$K8a15$& $8_{16}$ & $((2r^2 + r + 3w, 5r + 3w), (r^4w + 2r^3w + w^3, rw^2), (r^4w^2 + 2r^3w^2, 2r^2w^2),$\\&&$ (0, r^5 + 2r^3w^2), (0, r^4w^2))$\\
			
			$K8a16$& $8_{9}$ & $((r^3 + r + 4w, w^3 + 4r + w), (r^4w, rw^4), (w^4, r^4), (r^4w^3, r^3w^4))$\\
			
			$K8a17$& $8_{4}$ & $((w^4 + r^3 + r, 4r + 4w), (r^4w^4, 0), (0, r^4), (0, rw^4), (0,0), (0, r^3w^4))$\\
			
			$K8a18$& $8_{3}$ & $((w^4 + 4r, r^4 + 4w), (0,0), (0,0), (r^4w^4, r^4w^4))$\\
			
		\end{longtable}
	
\end{footnotesize}

We use our code, written in SageMath \cite{SageMath}. In this paper, there are the values for prime knots up to the crossing number equal to $8$. The values of the $CWR$ invariant for knots in the knot tables up to $16$ crossings can be found in the \texttt{CWRknots} folder in \url{https://drive.google.com/drive/folders/1mdF8zHY9Avmy1GnY4co3neH7q7b2Vnso}.


\begin{thebibliography}{99}
	
	\bibitem{KAtlas} D.~Bar-Natan. \emph{The Mathematica Package KnotTheory}, {Available at \url{https://katlas.org/wiki/Main_Page} (27/05/2024)}
	
	\bibitem{ChaKof08} A. Champanerkar and I. S. Kofman, On mutation and Khovanov homology, \emph{Commun. Contemp. Math.} 10 (2008), 973--992.
	
	\bibitem{Che13} Z. Cheng and H. Z. Gao, Mutation on knots and Whitney’s $2$-isomorphism theorem, \emph{Acta Math. Sin. (Engl. Ser.)} 29 (2013), no. 6, 1219--1230.
	
	\bibitem{SnapPy} M. Culler, N.M. Dunfield, M. Goerner and J.R. Weeks, \emph{Snap{P}y, a computer program for studying the geometry and topology of $3$-manifolds}, {Available at \url{http://snappy.computop.org} (27/05/2024)}
	
	\bibitem{SageMath} Developers, The~Sage, \emph{{S}agemath, the {S}age {M}athematics {S}oftware {S}ystem ({V}ersion 10.1)}, (2023),\\ \url{https://www.sagemath.org}
	
	\bibitem{HOMFLY85} P. Freyd, D. Yetter, J. Hoste, W. B. R. Lickorish, K. Millett and A. Ocneanu, A new polynomial invariant of knots and links, \emph{Bull. AMS} 12 (1985), 239–-246.
	
	\bibitem{Jab24} M. Jabłonowski, A Polynomial Pair Invariant of Alternating Knots and Links, (2024), accepted, \emph{J. Knot Theory Ramifications} \url{https://doi.org/10.1142/S0218216524500482}
	
	\bibitem{Kau88} L.H. Kauffman, New invariants in the theory of knots, \emph{Amer. Math. Monthly} 95 (1988), 195--242.
	
	\bibitem{Kau89} L.H. Kauffman, A Tutte polynomial for signed graphs, \emph{Discrete Applied Mathematics} 25 (1989), 105--127.
	
	\bibitem{Kau90} L.H. Kauffman, An invariant of regular isotopy, \emph{Transactions of the AMS} 318 (1990), 417-–471.
	
	\bibitem{KnaKol19} U. Knauer and K. Knauer, \emph{Algebraic graph theory.  Morphisms, Monoids and Matrices}, Vol.41 De Gruyter Studies in Mathematics, Walter de Gruyter GmbH, (2019).
	
	\bibitem{Laf13} K. Lafferty, The three-variable bracket polynomial for reduced, alternating links,
	\emph{Rose-Hulman Undergraduate Mathematics Journal} 14 (2013), 98–-113.
	
	\bibitem{Lis47} J.B. Listing, Vorstudien zur Topologie, \emph{Gottinger Studien(Abtheilung 1)} 1 (1847), 811--875.
	
	\bibitem{LivMoo23} C. Livingston and A.H. Moore, KnotInfo: Table of Knot Invariants, \\\url{https://knotinfo.math.indiana.edu/}, (June 2024).
	
	\bibitem{MenThi91} W. Menasco and M.B. Thistlethwaite, The Tait flyping conjecture, \emph{Bull. Amer. Math. Soc.} (1991), 403--412.
	
	\bibitem{MenThi93} W. Menasco and M.B. Thistlethwaite, The classification of alternating links, \emph{Annals of Mathematics} (1993), 113--171.
	
	
	
	\bibitem{Men21} W. Menasco, Alternating Knots, In \emph{Encyclopedia knot theory}, CRC Press (2021) 167--178.
	
	\bibitem{Mur87} K. Murasugi, Jones polynomials and classical conjectures in knot theory, \emph{Topology} 26.2 (1987), 187--194.
	
	\bibitem{PrzTra88} J. Przytycki and P. Traczyk, Invariants of links of the Conway type, \emph{Kobe J. Math.} 4 (1988), 115–-139.
	
	\bibitem{Sto24} A. Stoimenow, \emph{Knot data tables}, {Available at \url{https://stoimenov.net/stoimeno/homepage/ptab/index.html} (27/05/2024)}
	
	\bibitem{Thi87} M.B. Thistlethwaite, A spanning tree expansion of the Jones polynomial, \emph{Topology} 26 (1987), 297–-309
	
	\bibitem{Thi88} M.B. Thistlethwaite, Kauffman’s polynomial and alternating links, \emph{Topology} 27 (1988), 311--318.
	
\end{thebibliography}
\end{document}